\NewCommandCopy{\latexpageref}{\pageref}
\RenewDocumentCommand{\ref}{m}{{\rm\latexref{#1}}}
\RenewDocumentCommand{\pageref}{m}{{\upshape\latexpageref{#1}}}
 \newtheorem{theorem}{Theorem}[section]
 \newtheorem{lemma}[theorem]{Lemma}
 \newtheorem{proposition}[theorem]{Proposition}
 \theoremstyle{definition}
 \theoremstyle{remark}
 \newtheorem{remark}[theorem]{Remark}
 \numberwithin{equation}{section}
\begin{document}

%
%
%
%
%
%
%
%
%

\title[Critical point localization and multiplicity results]{Critical point
localization and multiplicity results in Banach spaces via Nehari manifold
technique}

\author[R. Precup]{Radu Precup}
\address{%
Faculty of Mathematics and Computer Science and Institute of Advanced Studies in Science and Technology\\
Babe\c{s}-Bolyai University, 400084 Cluj-Napoca, Romania\\
\& Tiberiu Popoviciu Institute of Numerical Analysis, Romanian Academy\\
P.O. Box 68-1, 400110 Cluj-Napoca, Romania}
\email{radu.precup@ubbcluj.ro}
%
\author[A. Stan]{Andrei Stan}
\address{%
Faculty of Mathematics and Computer Science, Babe\c{s}-Bolyai University, \\400084 Cluj-Napoca, Romania\\
\& Tiberiu Popoviciu Institute of Numerical Analysis, Romanian Academy\\
P.O. Box 68-1, 400110 Cluj-Napoca, Romania}
\email{andrei.stan@ubbcluj.ro}


\subjclass{47J25, 47J30, 34B15}

\keywords{Critical point, Nehari manifold, Birkhoff-Kellogg
invariant-direction, cone, $p$-Laplace operator, positive solution, multiple
solutions}


\begin{abstract}
In the paper, results on the existence of critical points in annular subsets
of a cone are obtained with the additional goal of obtaining multiplicity
results. Compared to other approaches in the literature based on the use of
Krasnoselskii's compression-extension theorem or topological index methods,
our approach uses the Nehari manifold technique in a surprising combination
with the cone version of Birkhoff-Kellogg's invariant-direction theorem.
This yields a simpler alternative to traditional methods involving
deformation arguments or Ekeland's variational principle. The new method is
illustrated on a boundary value problem for $p$-Laplace equations, and we
believe that it will be useful for proving the existence, localization, and
multiplicity of solutions for other classes of problems with variational
structure.
\end{abstract}

\maketitle
\section{Introduction}

Finding localized solutions of equations (often equivalent to finding
critical points of a given functional) in predefined domains is of interest
in mathematical models, as it provides a certain degree of control over the
solutions of the modeled system $-$ for example, one may seek a solution
whose energy remains within specific bounds. However, this approach
introduces additional challenges compared to critical point theory in the
entire space, primarily due to the behavior of the functional at the
boundaries. For instance, if a functional attains its minimum at a boundary
point, that point is not necessarily a critical point in the usual sense, as
the directional derivatives may not vanish in every direction of the space.

Some of the earliest attempts to localize critical points date back to
Schechter \cite{s1,s2} (see also \cite{st}). Using pseudogradients and
deformation arguments and imposing a boundary condition on the sphere,
Schechter established the existence of critical points within a ball under a
compactness Palais-Smale condition on the functional. It can be said that
Schechter's theorems (for minimizers and points of mountain pass type) are
critical point versions of Schaefer's fixed point theorem in a ball \cite[p.
139]{ag}, a particular case of the general Leray-Schauder fixed point
theorem \cite[Theorem 6.5.4]{ag}. Since 2008 \cite{p0}, the first author has
been interested in locating critical points in annular subsets of a cone,
with the adjacent goal of obtaining multiple solutions in such disjoint
sets. Critical point results in annular conical sets can be seen as
extensions of Krasnoselskii's fixed point theorem for cones. Similar to
Schechter's approach, the methods in \cite{p0} relied on deformation
arguments within Hilbert spaces, exploiting their rich geometric structure.
Later, in papers \cite{lpv} and \cite{ppv}, analogous results were obtained
in Banach spaces with some geometric properties. The alternative method of
obtaining critical points, based on Ekeland's variational principle, has
also been used for localization in bounded conical sets \cite{p1}, \cite{ppv}%
.

An interesting idea is to search for critical points on specific subsets
where they are likely to lie. A classical example is the Nehari manifold,
which has been extensively studied in the literature. A particularly
insightful and comprehensive reference on this topic is the paper by Szulkin
and Weth \cite{sw}; see also \cite{cir}, \cite{cd}, \cite{dp}, \cite{s} and
\cite{w}. For a real Banach space $X$ and a $C^{1}$ functional $E$, the
corresponding Nehari manifold is defined as
\begin{equation*}
\mathcal{N}:=\{u\in X\setminus \left\{ 0\right\} \,:\ \langle E^{\prime
}(u),u\rangle \ =0\},
\end{equation*}%
where $\langle \cdot ,\cdot \rangle $ denotes the dual pairing between $%
X^{\ast }$ and $X$. Obviously, any nonzero critical point of $E,$ i.e.,
solution of the equation $E^{\prime }\left( u\right) =0,$ belongs to $%
\mathcal{N}.$ It may happen that the converse statement is also valid for
certain points of $\mathcal{N}$ with appropriate properties. For example, we
can look for points that minimize the functional $E$ on $\mathcal{N}$, even
if $E$ is unbounded from below on the entire domain.

Given the parallelism that can be highlighted between the fixed point and
critical point theory, it is natural to assume that a deeper interaction of
the two theories would be possible. This is exactly the purpose of this
work, which for the first time combines the Nehari manifold technique with
the topological invariant-direction theorem of Birkhoff and Kellogg, thus
obtaining results for locating solutions in annular conical sets without
using Ekeland's principle. The idea is to use a cone version of the
Birkhoff-Kellogg theorem for a given operator defined on a domain whose
boundary coincides with the Nehari manifold, to guarantee the existence of
an eigenvalue and an eigenvector. Then, using the definition of the Nehari
manifold, it is shown that the eigenvalue must be equal to one, which
implies that the corresponding direction is a critical point of the
functional. Since the invariant-direction theorem is fundamentally derived
using the fixed point index, our approach effectively combines critical
point techniques (the Nehari manifold method) with fixed point methods.

In \cite{as}, the second author exploits the method of the Nehari manifold,
combining it with Ekeland's variational principle to obtain solutions within
annular domains. The present work aims not only to demonstrate a natural and
somewhat unexpected application of the Birkhoff-Kellogg theorem but also to
extend and strengthen the results of \cite{as} in several key directions:
first and foremost, we generalize the theory from Hilbert spaces to Banach
spaces. Secondly, we relax the regularity assumption on the functional,
requiring only $C^{1}$ smoothness instead of $C^{2}$, and finally, some
conditions imposed in \cite{as} are no longer necessary in our framework.

\section{Preliminaries}

In this section we recall some basic notions and results that are used
throughout the paper.

\subsection{The duality mapping}

Let $X$ be a real Banach space with norm $\left\vert \cdot \right\vert $, $%
X^{\ast }$ its dual space, and let $\langle \cdot ,\cdot \rangle $ denote
the dual pairing between $X^{\ast }$ and $X$. A function $\varphi \colon
\mathbb{R}_{+}\rightarrow \mathbb{R}_{+}$ is said to be a normalization
function if it is continuous, strictly increasing, $\varphi \left( 0\right)
=0$ and $\varphi \left( \tau \right) \rightarrow \infty $ as $\tau
\rightarrow \infty .$ The duality mapping corresponding to the normalization
function $\varphi $ is the set-valued mapping $J:X\rightarrow 2^{X^{\ast }}$
given by
\begin{equation}
J\left( x\right) =\{x^{\ast }\in X^{\ast }\,:\,\langle x^{\ast
}\,,\,x\rangle =\varphi (|x|)|x|\,,\ \,|x^{\ast }|_{X^{\ast }}=\varphi
(|x|)\}.  \label{duality_mapping_definition}
\end{equation}%
Several fundamental properties of the duality mapping are summarized in the
following proposition. For proofs and additional information, we refer the
reader to \cite{ic,djm,d}.

\begin{proposition}
\label{pdm} The duality mapping \eqref{duality_mapping_definition} has the
following properties:

\begin{description}
\item[(a)] For each $x\in X,$ the set $J\left( x\right) $ is nonempty,
bounded, convex and closed;

\item[(b)] $J$ is monotone, i.e., $\text{for all }x,y\in X,\,x^{\ast }\in
J(x)\text{ and }y^{\ast }\in J(y),$ one has
\begin{equation*}
\langle x^{\ast }-y^{\ast },x-y\rangle \geq \left( \varphi \left( \left\vert
x\right\vert \right) -\varphi \left( \left\vert y\right\vert \right) \right)
\left( \left\vert x\right\vert -\left\vert y\right\vert \right) \geq 0;
\end{equation*}

\item[(c)] If $X$ is strictly convex, then $J$ is strictly monotone, i.e.,
for all $x,y\in X$ with $x\neq y,$ and $x^{\ast }\in J\left( x\right) ,$ $%
y^{\ast }\in J\left( y\right) ,$ one has
\begin{equation*}
\langle x^{\ast }-y^{\ast },x-y\rangle >0;
\end{equation*}

\item[(d)] If $X^{\ast }$ is strictly convex, then $J$ is single-valued;

\item[(e)] If $X$ is reflexive and $J$ is single-valued, then $J\left(
X\right) =X^{\ast }$ and $J$ is demicontinuous, i.e., if $x_{n}\rightarrow x$
strongly, then $J\left( x_{n}\right) \rightharpoonup J\left( x\right) $
weakly;

\item[(f)] If $X$ is reflexive and locally uniformly convex and $J$ is
single-valued, then $J$ is bijective from $X$ to $X^{\ast }$ and its inverse
$J^{-1}$ is bounded, continuous and monotone.
\end{description}
\end{proposition}

\subsection{Energetic Harnack inequality for the $p$-Laplacian}

As an example of duality mapping, we mention the case of the Sobolev space $%
W_{0}^{1,p}\left( 0,1\right)$ for $1<p<\infty$, endowed with the energetic
norm%
\begin{equation*}
\left\vert u\right\vert _{1,p}=\left( \int_{0}^{1}\left\vert u^{\prime
}\left( s\right) \right\vert ^{p}ds\right) ^{\frac{1}{p}},
\end{equation*}%
corresponding to the $p$-Laplace operator $J\left( u\right) =-\left(
\left\vert u^{\prime }\right\vert ^{p-2}u^{\prime }\right) ^{\prime }.$ This
operator is the duality mapping of the space $W_{0}^{1,p}\left( 0,1\right) $
corresponding to the normalization function $\varphi \left( \tau \right)
=\tau ^{p-1}.$ In virtue of the very good geometry of the space, $\ J$ is
invertible and its inverse
\begin{equation*}
J^{-1}:W^{-1,p^{\prime }}(0,1)\rightarrow W_{0}^{1,p}\left( 0,1\right), \ \
\ \left( \tfrac{1}{p}+\tfrac{1}{p^\prime}=1\right),
\end{equation*}%
is bounded, continuous, and strictly monotone.

With respect to the $p$-Laplace operator, we have the following Harnack type
inequality in terms of the energetic norm, obtained in \cite{lpv}. This
result proves to be extremely useful for localization in annular sets, as it
allows for obtaining certain lower bounds (see also \cite{ppv}).

\begin{lemma}[{\protect\cite[Lemma 3.1]{lpv}}]
\label{Harnack} For every function $u\in W_{0}^{1,p}(0,1)$ such that $%
u(t)\geq 0$ and $u(t)=u(1-t)$ for all $t\in \left( 0,\frac{1}{2}\right) $,
if $Ju\in C([0,1],\,\mathbb{R}_{+})$ and $Ju$ is nondecreasing on $\left( 0,%
\frac{1}{2}\right) $, then
\begin{equation}
u(t)\geq t(1-2t)^{1/(p-1)}|u|_{1,p},  \label{ih}
\end{equation}%
for all $t\in \left( 0,\frac{1}{2}\right) $.
\end{lemma}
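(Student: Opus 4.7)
The plan is to factor the estimate through two elementary bounds: a pointwise bound $u(t) \geq t\,u'(t)$ coming from monotonicity of $u'$, and a bound $u'(t) \geq (1-2t)^{1/(p-1)}|u|_{1,p}$ coming from the concavity of $\phi := |u'|^{p-2}u'$ together with $\phi(1/2) = 0$. Multiplying them produces \eqref{ih}.

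First I would extract the structural properties of $\phi$ on $[0,1/2]$. Since $-\phi' = Ju$ is continuous and nonnegative, $\phi$ is of class $C^1$ and nonincreasing; since $Ju$ is nondecreasing on $(0,1/2)$, $\phi$ is concave there. The symmetry $u(t) = u(1-t)$ forces $u'(1/2) = 0$, hence $\phi(1/2) = 0$, which combined with monotonicity gives $\phi \geq 0$ and $u' \geq 0$ on $[0,1/2]$. Because $x \mapsto x^{1/(p-1)}$ is nondecreasing, $u' = \phi^{1/(p-1)}$ is itself nonincreasing on that interval, so $u(t) = \int_0^t u'(s)\,ds \geq t\,u'(t)$.

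Next I would exploit the concavity of $\phi$ with $\phi(1/2) = 0$. The chord-slope form of concavity shows that $s \mapsto \phi(s)/(1/2-s)$ is nondecreasing on $[0,1/2)$, whence $\phi(t) \geq (1-2t)\,\phi(0)$ for every $t \in (0,1/2)$. To bring in $|u|_{1,p}$, the crude bound $\phi \leq \phi(0)$ yields
\[
|u|_{1,p}^{\,p} \;=\; 2\!\int_0^{1/2} \phi(s)^{p/(p-1)}\,ds \;\leq\; \phi(0)^{p/(p-1)},
\]
so $\phi(0) \geq |u|_{1,p}^{\,p-1}$. Chaining these two inequalities gives $\phi(t) \geq (1-2t)\,|u|_{1,p}^{\,p-1}$; extracting $(p-1)$-th roots produces $u'(t) \geq (1-2t)^{1/(p-1)}\,|u|_{1,p}$, and combining with $u(t) \geq t\,u'(t)$ yields exactly \eqref{ih}.

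The only delicate point is the regularity/symmetry step: one must verify that $\phi$ extends continuously to $[0,1/2]$ with $\phi(1/2) = 0$, starting from $u \in W_0^{1,p}(0,1)$, the symmetry of $u$, and the continuity of $Ju$. Once that is in place, every remaining step is a one-line consequence of monotonicity or of the single chord-slope property of concave functions vanishing at an endpoint.
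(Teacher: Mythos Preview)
The paper does not actually prove this lemma: it is quoted verbatim as Lemma~3.1 from \cite{lpv} and used as a black box in Section~4, so there is no in-paper proof to compare against. That said, your argument is correct and self-contained. Setting $\phi=|u'|^{p-2}u'$, the hypothesis $Ju=-\phi'\in C([0,1],\mathbb{R}_+)$ makes $\phi\in C^1$, hence $u'=\operatorname{sign}(\phi)\,|\phi|^{1/(p-1)}$ is continuous and the symmetry $u(t)=u(1-t)$ legitimately yields $u'(1/2)=0$, i.e.\ $\phi(1/2)=0$. From there your two ingredients---$u(t)\ge t\,u'(t)$ from the monotonicity of $u'$, and $\phi(t)\ge(1-2t)\phi(0)\ge(1-2t)|u|_{1,p}^{\,p-1}$ from concavity plus the crude bound $2\!\int_0^{1/2}\phi^{p/(p-1)}\le\phi(0)^{p/(p-1)}$---combine exactly as you describe. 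The only point worth making explicit in a final write-up is the one you flag yourself: the passage from $u\in W_0^{1,p}$ with $Ju\in C$ to $u\in C^1$, which justifies evaluating $u'(1/2)$ pointwise.
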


\subsection{Birkhoff-Kellogg type theorem in cones}

One of our tools in this paper is the version in cones due to Krasnoselskii
and Ladyzenskii\ \cite{kl} (see also \cite[p.139]{ag}, \cite{guo} and \cite%
{cir}), of the classical invariant-direction theorem of Birkhoff and Kellogg
\cite{bk} (see also \cite{ag1}, \cite[Theorem 6.6]{ag}) regarding the
existence of a `nonlinear' eigenvalue and eigenvector for compact maps in
Banach spaces.

\begin{theorem}[Krasnoselskii and Ladyzenskii]
\label{tkl}Let $X$ be a real Banach space, $U\subset X$ be an open bounded
set with $0\in U$, $K\subset X$ a cone, and $T:K\cap \overline{U}\rightarrow
K$ a compact operator. If
\begin{equation*}
\inf_{x\in K\cap \partial U}\left\vert T\left( x\right) \right\vert >0,
\end{equation*}%
then, there exist $\lambda _{0}>0$ and $x_{0}\in K\cap \partial U$ such that
\begin{equation*}
x_{0}=\lambda _{0}T\left( x_{0}\right) .
\end{equation*}
\end{theorem}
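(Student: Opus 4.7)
The plan is to argue by contradiction using the fixed point index on the cone, which is the standard topological machinery behind Birkhoff--Kellogg type statements. Suppose to the contrary that for every $\lambda>0$ and every $x\in K\cap\partial U$ one has $x\neq\lambda T(x)$. Since $0\in U$ forces $0\notin\partial U$, for every $\lambda\geq 0$ the compact operator $\lambda T:K\cap\overline{U}\to K$ has no fixed point on $K\cap\partial U$, so the cone fixed point index $i_K(\lambda T,K\cap U)$ is well defined.

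The first step is the \emph{small-$\lambda$ side}: the linear homotopy $H(t,x)=t\lambda T(x)$, $t\in[0,1]$, joins the zero map to $\lambda T$, and by the contradiction hypothesis (applied to the positive scalar $t\lambda$ when $t>0$, and to $0\notin\partial U$ when $t=0$) it has no fixed points on $K\cap\partial U$. Homotopy invariance therefore gives
\begin{equation*}
i_K(\lambda T,K\cap U)=i_K(0,K\cap U)=1\qquad\text{for every }\lambda>0.
\end{equation*}

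The second step is the \emph{large-$\lambda$ side}, where the boundary hypothesis $m:=\inf_{K\cap\partial U}|T(x)|>0$ finally enters. Set $\rho:=\sup_{x\in\overline{U}}|x|<\infty$, fix any $e\in K\setminus\{0\}$, and consider the affine homotopy $(s,x)\mapsto\lambda T(x)+se$ for $s\in[0,S]$ with $S$ large. For $\lambda$ sufficiently large I would show that no $x\in K\cap\partial U$ satisfies $x=\lambda T(x)+se$: indeed one would have $x-\lambda T(x)=se\in K$, and combining $|T(x)|\geq m$ on the boundary with the cone structure would force $|x|$ to exceed $\rho$. This is the classical ``$e$-boundary condition,'' which forces $i_K(\lambda T,K\cap U)=0$, contradicting the value $1$ obtained in the first step.

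The main obstacle I anticipate is precisely this norm comparison: deducing a lower bound on $|x|$ from $x-\lambda T(x)\in K$ requires some monotonicity of the norm with respect to the cone order, which is automatic when the cone is normal (as in the function cones arising in applications). In a fully general Banach space cone one should choose $e$ together with a strictly positive functional $\varphi\in K^{\ast}$ with $\varphi(e)>0$ and apply $\varphi$ to the identity $x=\lambda T(x)+se$, obtaining $\varphi(x)=\lambda\varphi(T(x))+s\varphi(e)$, from which the required bound follows using a uniform lower estimate $\varphi(T(x))\geq c\,m$ on $K\cap\partial U$. Once this comparison is in place, the homotopy and excision steps are routine and yield the desired $(\lambda_0,x_0)$.
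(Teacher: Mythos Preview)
The paper does not prove this theorem: it is stated in the preliminaries as a known result of Krasnoselskii and Ladyzenskii, with references to \cite{kl}, \cite{ag}, \cite{guo}, \cite{cir}, and is then invoked as a black box in the proof of Theorem~\ref{t1}. So there is no ``paper's proof'' to compare against; I can only assess your argument on its own.

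On its own merits, your outline is the standard fixed-point-index proof and is sound. The first step (index equal to $1$ via the homotopy $t\lambda T$) is correct as written. In the second step you correctly isolate the only real difficulty and propose the right cure, but one sentence needs tightening: the estimate ``$\varphi(T(x))\ge c\,m$'' does not follow from $|T(x)|\ge m$ for a generic $\varphi\in K^{*}$, and a strictly positive functional on all of $K\setminus\{0\}$ need not exist in an arbitrary cone. What actually makes the argument go through is the compactness of $T$: the set $C:=\overline{T(K\cap\partial U)}$ is compact in $K\setminus\{0\}$, and a Hahn--Banach separation together with a finite-cover argument produces $\varphi\in K^{*}$ with $c_{0}:=\inf_{y\in C}\varphi(y)>0$. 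Choosing $e\in C$ (so that $\varphi(e)>0$) and applying $\varphi$ to $x=\lambda T(x)+se$ yields $|\varphi|\,\rho\ge\varphi(x)\ge\lambda c_{0}$, which is impossible for $\lambda>|\varphi|\,\rho/c_{0}$; the same estimate shows $\lambda T+Se$ has no fixed point in $K\cap\overline{U}$ once $S\varphi(e)>|\varphi|\,\rho$. With $\varphi$ constructed this way rather than assumed, your proof is complete.
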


\section{Main results}

In what follows, $X$ is Banach space with a single-valued strictly monotone
duality mapping $J,$ which is invertible, with continuous inverse $J^{-1};$ $%
K$ is a nondegenerate cone in $X,$ i.e., $K\subset X$ is closed, convex, $%
\lambda K\subset K$ for all $\lambda \in
\mathbb{R}
_{+}$ and $K\setminus \left\{ 0\right\} \neq \emptyset ;$ and $%
E:X\rightarrow \mathbb{R}$ is a $C^{1}$ Fr\'{e}chet differentiable
functional.

Given $0<r<R<\infty $, our aim is to determine a critical point of $E$
within the conical annular set
\begin{equation*}
K_{r,R}:=\left\{ u\in K\,:\,r\leq \left\vert u\right\vert \leq R\right\} .
\end{equation*}%
The Nehari manifold of the functional $E$ restricted to $K_{r,R}$ is the set
\begin{equation*}
\mathcal{N}_{r,R}=\left\{ u\in K_{r,R}\,:\,\langle E^{\prime }(u),u\rangle
=0\right\} .
\end{equation*}%
Obviously, $\mathcal{N}_{r,R}$ is a closed set.

In the subsequent, we consider the operators
\begin{equation*}
N\colon X\rightarrow X^{\ast },\quad N\left( u\right) :=J\left( u\right)
-E^{\prime }(u),
\end{equation*}%
and
\begin{equation*}
T:X\rightarrow X,\quad T=J^{-1}N.
\end{equation*}%
Our first two assumptions regard the operator $T$, and read as follows:

\begin{description}
\item[(H1)] The operator $T$ is completely continuous from $X$ to $X$, and
moreover, it is invariant with respect to the cone $K$, i.e.,
\begin{equation*}
T(K)\subset K.
\end{equation*}

\item[(H2)] The operator $T$ is bounded away from zero on the set $\mathcal{N%
}_{r,R}$, that is,
\begin{equation*}
\inf_{u\in \mathcal{N}_{r,R}}\left\vert T(u)\right\vert >0.
\end{equation*}
\end{description}

To state the third assumption, for each point $u\in K\setminus \{0\}$,
define the function
\begin{equation}
\alpha _{u}\colon \mathbb{R}_{+}\rightarrow \mathbb{R},\quad \alpha
_{u}(s )=E(s u).  \label{alpha}
\end{equation}%
Clearly,%
\begin{equation*}
\alpha _{u}^{\prime }\left(s \right) =\left\langle E^{\prime }\left(
s u\right) ,u\right\rangle \ \ \left( s \in
\mathbb{R}
_{+},\ u\in K\setminus \{0\}\right) .
\end{equation*}

\begin{description}
\item[(H3)] For every $u\in K_{r,R}$, one has
\begin{equation*}
\alpha _{u}^{\prime }\left( \frac{r}{|u|}\right) >0,\quad \alpha
_{u}^{\prime }\left( \frac{R}{|u|}\right) <0,
\end{equation*}%
and the equation
\begin{equation*}
\alpha _{u}^{\prime }(s )=0
\end{equation*}%
has a unique solution $s=:s(u)$ in the interval $\left[ \frac{r}{|u|},%
\frac{R}{|u|}\right] $.
\end{description}

Note that under condition (H3), for any $u\in K_{r,R},$ one has the strict
inequalities%
\begin{equation}
\frac{r}{|u|}<s\left( u\right) <\frac{R}{|u|}.  \label{f5}
\end{equation}

\begin{lemma}
\label{lc copy(1)}Under assumption (H3), the Nehari manifold $\mathcal{N}%
_{r,R}$ has the representations:
\begin{equation*}
\mathcal{N}_{r,R}=\left\{ \,s(u)\,u:\ u\in K_{r,R}\,\right\} =\left\{ u\in
K_{r,R}:\ s\left( u\right) =1\right\} .
\end{equation*}%
Moreover, for every $u\in \mathcal{N}_{r,R},$ one has
\begin{equation}
r<\left\vert u\right\vert <R.  \label{f2}
\end{equation}
\end{lemma}

\begin{proof}
Denote
\begin{equation*}
A:=\left\{ \,s(u)\,u:\ u\in K_{r,R}\,\right\} ,\ \ \ B:=\left\{ u\in
K_{r,R}:\ s\left( u\right) =1\right\} .
\end{equation*}

(a) If $u\in \mathcal{N}_{r,R},$ one has $u\in K_{r,R}$ and $\langle
E^{\prime }(u),u\rangle =0,$ that is $\alpha _{u}^{\prime }\left( 1\right)
=0.$ Hence $s\left( u\right) =1$ and so $u=s\left( u\right) u\in A.$ Thus $%
\mathcal{N}_{r,R}\subset A.$

(b) Conversely, if $u\in A,$ then $u=s\left( v\right) v$ for some $v\in
K_{r,R},$ and from the definition of $s,$
\begin{equation*}
0=\alpha _{v}^{\prime }\left( s\left( v\right) \right) =\left\langle
E^{\prime }\left( s\left( v\right) v\right),v \right\rangle =\left\langle
E^{\prime }\left( u\right) ,v\right\rangle ,
\end{equation*}%
whence $\left\langle E^{\prime }\left( u\right) ,u\right\rangle =0,$ showing
that $u\in \mathcal{N}_{r,R}.$ Thus $A\subset \mathcal{N}_{r,R}.$

Therefore $\mathcal{N}_{r,R}=A.$

(c) The inclusion $\mathcal{N}_{r,R}\subset B$ is clear in view of (a).

(d) If $u\in B,$ then $s\left( u\right) =1,$ so $\alpha _{u}^{\prime }\left(
1\right) =0,$ i.e., $\left\langle E^{\prime }\left( u\right) ,u\right\rangle
=0,$ which shows that $u\in \mathcal{N}_{r,R}.$ Thus $B\subset \mathcal{N}%
_{r,R}.$

Therefore $\mathcal{N}_{r,R}=B.$

To prove (\ref{f2}), take any $u\in \mathcal{N}_{r,R}.$ Then $s\left(
u\right) =1,$ and in virtue of (\ref{f5}), one has
\begin{equation*}
\frac{r}{\left\vert u\right\vert }<s\left( u\right) =1<\frac{R}{\left\vert
u\right\vert },
\end{equation*}%
which yields (\ref{f2}).
\end{proof}

We are now ready to state our first main result of this paper, whose proof
relies on the Krasnoselskii-Ladyzhenskii theorem.

\begin{theorem}
\label{t1} Assume that conditions (H1)-(H3) are satisfied. Then, there
exists $u^{\ast }\in \mathcal{N}_{r,R}$ such that $E^{\prime }\left( u^{\ast
}\right) =0.$
\end{theorem}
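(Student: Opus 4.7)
The plan is to apply Theorem \ref{tkl} (Krasnoselskii-Ladyzhenskii) to the operator $T$ on a carefully chosen open bounded set $U \subset X$ for which $K \cap \partial U = \mathcal{N}_{r,R}$. Since $T(u) = u$ is equivalent to $J(u) = N(u) = J(u) - E'(u)$, i.e., to $E'(u) = 0$, the output of Theorem \ref{tkl} will give a point $x_0 \in \mathcal{N}_{r,R}$ and a scalar $\lambda_0 > 0$ with $x_0 = \lambda_0 T(x_0)$, and the goal will then reduce to proving $\lambda_0 = 1$.

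The first step is to exploit the radial structure of $\mathcal{N}_{r,R}$ inside $K$. From $\alpha_{\lambda u}(\sigma) = \alpha_u(\lambda \sigma)$ and the uniqueness clause of (H3), one obtains the homogeneity identity $s(\lambda u) = s(u)/\lambda$ for every $u \in K \setminus \{0\}$ and $\lambda > 0$; hence each ray in $K$ meets $\mathcal{N}_{r,R}$ in exactly one point, whose norm is $s(v) \in (r, R)$ when $v = u/|u|$. Lemma \ref{lc} makes $s$ continuous on the closed subset $\{v \in K : |v| = 1\}$ of the unit sphere $\{v \in X : |v| = 1\}$, so by Tietze's extension theorem I would extend it to a continuous map $h$ from the full unit sphere of $X$ into $[r, R]$, and then define
$$U := \{0\} \cup \{u \in X \setminus \{0\} \, : \, |u| < h(u/|u|)\}.$$
A short verification gives that $U$ is open (the function $u \mapsto |u| - h(u/|u|)$ is continuous on $X \setminus \{0\}$, and the ball of radius $r$ about $0$ lies in $U$ since $h \geq r$), bounded by $R$, and that $K \cap \partial U = \mathcal{N}_{r,R}$: the condition $u \in K \cap \partial U$ unwinds to $|u| = s(u/|u|) = s(u) |u|$, which forces $s(u) = 1$ and hence $u = s(u) u \in \mathcal{N}_{r,R}$; the reverse inclusion is immediate.

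With this $U$ in hand, (H1) supplies the compactness of $T$ on $K \cap \overline{U}$ and the invariance $T(K \cap \overline{U}) \subset K$, while (H2) yields $\inf_{u \in K \cap \partial U} |T(u)| > 0$. Theorem \ref{tkl} then produces $\lambda_0 > 0$ and $x_0 \in \mathcal{N}_{r,R}$ with $x_0 = \lambda_0 T(x_0)$, which is equivalent to $J(x_0/\lambda_0) = J(x_0) - E'(x_0)$. To show $\lambda_0 = 1$, I would pair this identity with $x_0$ and use the Nehari relation $\langle E'(x_0), x_0 \rangle = 0$ together with $\langle J(w), w \rangle = \varphi(|w|) |w|$; this reduces the equation to
$$0 = |x_0|\bigl(\varphi(|x_0|) - \varphi(|x_0|/\lambda_0)\bigr),$$
and since $|x_0| \geq r > 0$ and $\varphi$ is strictly increasing, necessarily $\lambda_0 = 1$, which finishes the proof. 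The main obstacle I anticipate is this very first step, the construction of $U$: the Nehari manifold has a clean parametrization inside the cone, but promoting that description to a genuinely open subset of the ambient Banach space $X$ requires both the continuity of $s$ provided by Lemma \ref{lc} and a Tietze-type extension to the full unit sphere.
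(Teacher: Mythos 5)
Your proposal is correct and follows the paper's overall strategy: apply the Krasnoselskii--Ladyzhenskii theorem to $T=J^{-1}N$ on an open bounded set $U$ with $K\cap\partial U=\mathcal{N}_{r,R}$, then use the Nehari relation $\langle E'(x_0),x_0\rangle=0$ to force the eigenvalue $\lambda_0$ to equal $1$. Where you genuinely diverge is in the construction of $U$: the paper forms the relatively open set $\Tilde{U}=\{\sigma u:\,u\in\mathcal{N}_{r,R},\ \sigma\in[0,1)\}$, proves its relative openness by showing $K\setminus\Tilde{U}$ is closed, identifies $\partial_K\Tilde{U}=\mathcal{N}_{r,R}$, and then passes to a true open subset of $X$ by pulling $\Tilde{U}$ back through a Dugundji retraction $\rho:X\to K$; you instead exploit the homogeneity $s(\lambda u)=s(u)/\lambda$ (a correct consequence of (H3) that the paper never makes explicit), restrict $s$ to the unit sphere of the cone, extend it by Tietze to a continuous $h$ on the whole unit sphere with values in $[r,R]$, and take the star-shaped set $\{|u|<h(u/|u|)\}\cup\{0\}$. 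Both routes are sound and both rest on Lemma \ref{lc} together with the sign conditions in (H3) (which, as you implicitly use, guarantee that Nehari points cannot have norm exactly $r$ or $R$); yours replaces the retraction argument and the closedness of $\mathcal{N}_{r,R}$ by the radial parametrization plus a Tietze extension, which makes the identity $K\cap\partial U=\mathcal{N}_{r,R}$ an explicit computation rather than a boundary-transfer argument. In the endgame you also differ slightly: the paper tests $J(u^\ast)-J(\lambda_0u^\ast)$ against $u^\ast-\lambda_0u^\ast$ and invokes strict monotonicity of $J$ (hence strict convexity of $X$), whereas you pair the eigenvalue identity with $x_0$ and use only the defining relation $\langle J(w),w\rangle=\varphi(|w|)|w|$ and the strict monotonicity of the normalization function $\varphi$; this is marginally more elementary, though under the standing hypotheses both are available.
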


\begin{proof}
The central idea of the proof is to apply the Krasnoselskii-Ladyzenskii
theorem to the operator $T=J^{-1}N$ on an open bounded set $U$ chosen such
that
\begin{equation*}
K\cap \partial U=\mathcal{N}_{r,R}.
\end{equation*}%
To this end, we define the set
\begin{equation*}
V:=\left\{ \sigma u\,:\,u\in \mathcal{N}_{r,R},\,\ \sigma \in \lbrack
0,1)\right\} .
\end{equation*}%
Clearly, $0\in V$ and $V$ is bounded. Indeed, for any $w\in V\setminus \{0\}$
we have
\begin{equation*}
w=\sigma u,
\end{equation*}%
for some $u\in \mathcal{N}_{r,R}$ and $\sigma \in \left( 0,1\right) ,$ so
\begin{equation*}
|w|\leq |u|=|s(u)\,u|<\frac{R}{|u|}\,|u|=R.
\end{equation*}%
To verify that $V$ is open in the relative topology of $K$, it is sufficient
to show that $K\setminus V$ is closed. Let $w_{k}\in K\setminus V$ and $%
w_{k}\rightarrow w$. Then, there exist $u_{k}\in \mathcal{N}_{r,R}$ and $%
\sigma _{k}\geq 1$ such that
\begin{equation}
w_{k}=\sigma _{k}u_{k}.  \label{w_k}
\end{equation}%
Since the sequence $u_{k}$ is bounded (recall that $\left\vert
u_{k}\right\vert \in (r,R)$), it follows that the sequence $\sigma _{k}$ is
also bounded and therefore admits a convergent subsequence, which we denote
again by $\sigma _{k}$. Let $\sigma $ be the limit of this subsequence.
Clearly, $\sigma _{k}\geq 1$ implies that $\sigma \geq 1$. From \eqref{w_k},
we may write $u_{k}=\frac{1}{\sigma _{k}}w_{k}$, hence $u_{k}$ is convergent
to $u:=\frac{1}{\sigma }w$. As the Nehari manifold $\mathcal{N}_{r,R}$ is
closed, it follows that $u\in \mathcal{N}_{r,R}$. Consequently, the limit
point $w$ satisfies
\begin{equation*}
w=\sigma u\quad \text{with }\sigma \geq 1\text{ and }u\in \mathcal{N}_{r,R},
\end{equation*}%
which implies that $w\in K\setminus V$.

From the above argument, we also infer that
\begin{equation*}
\overline{V}=\left\{ \sigma u\,:\,u\in \mathcal{N}_{r,R},\,\ \sigma \in
\lbrack 0,1]\right\} ,
\end{equation*}%
hence the relative boundary (to $K$) of $V$ is given by
\begin{equation*}
\partial _{K}V:=\overline{V}\setminus V=\left\{ \sigma u\,:\,u\in \mathcal{N}%
_{r,R},\,\ \sigma =1\right\} =\mathcal{N}_{r,R}.
\end{equation*}%
To strictly comply with the conditions of Theorem \ref{tkl}, let us indicate
the open bounded set $U$ of the space $X$ to which it applies. Recall that,
by a theorem due to Dugundji (see \cite[Corollary~4.2]{jd}), every nonempty
closed convex subset of a real Banach space $X$ is a retract of $X$. In
particular, every cone of $X$ is a retract of $X$. Let $\rho \colon
X\rightarrow K$ be a retraction, i.e., a continuous mapping satisfying $\rho
(u)=u$ for all $u\in K$.

Since $V$ is bounded, we can choose $M>0$ such that $\overline{V}\subset
B_{M}$, where $B_{M}$ is the open ball of $X$ centered at the origin and of
radius $M.$ Then, the set
\begin{equation*}
U:=\rho ^{-1}(V)\cap B_{M}
\end{equation*}%
is open in $X$ and
\begin{equation*}
K\cap \partial U=\mathcal{N}_{r,R}.
\end{equation*}%
Indeed, using that $\rho (u)=u$ for all $u\in K$ and $\overline{V}\subset
K\cap B_{M}$, one has
\begin{equation*}
K\cap \overline{U}=K\cap \overline{\rho ^{-1}(V)\cap B_{M}}=\overline{V},
\end{equation*}%
whence
\begin{equation*}
K\cap \partial U=K\cap \partial (\rho ^{-1}(V)\cap B_{M})=\partial
_{K}\left( K\cap \rho ^{-1}(V)\cap B_{M})\right) =\partial _{K}V,
\end{equation*}%
which proves our claim.

Now, based on assumptions (H1) and (H2), Theorem \ref{tkl} applies to the
operator
\begin{equation*}
T\colon K\cap \overline{U}\rightarrow K
\end{equation*}%
and guarantees the existence of an element $u^{\ast }\in \mathcal{N}_{r,R}$
and a positive scalar $\lambda _{0}>0$ such that
\begin{equation*}
J^{-1}N\left( u^{\ast }\right) =\lambda _{0}u^{\ast },
\end{equation*}%
or, equivalently,
\begin{equation}
N\left( u^{\ast }\right) =J(\lambda _{0}u^{\ast }).  \label{f6}
\end{equation}%
By the definition of $\mathcal{N}_{r,R}$, we have
\begin{equation}
0=\langle J\left( u^{\ast }\right) -N\left( u^{\ast }\right) ,u^{\ast
}\rangle =\langle J\left( u^{\ast }\right) -J(\lambda _{0}u^{\ast }),u^{\ast
}\rangle ,  \label{ineq1}
\end{equation}%
which also implies that
\begin{equation}
0=\langle J\left( u^{\ast }\right) -N\left( u^{\ast }\right) ,u^{\ast
}\rangle =\langle J\left( u^{\ast }\right) -J(\lambda _{0}u^{\ast }),\lambda
_{0}u^{\ast }\rangle .  \label{ineq2}
\end{equation}%
Combining \eqref{ineq1} and \eqref{ineq2}, one derives
\begin{equation*}
\langle J\left( u^{\ast }\right) -J(\lambda _{0}u^{\ast }),u^{\ast }-\lambda
_{0}u^{\ast }\rangle =0.
\end{equation*}%
Using the strict monotonicity property of the duality mapping $J$, one
necessary have
\begin{equation*}
\lambda _{0}u^{\ast }=u^{\ast },
\end{equation*}%
hence $\lambda _{0}=1$. Consequently, equality (\ref{f6}) becomes $N\left(
u^{\ast }\right) =J(u^{\ast })$, therefore $u^{\ast }$ is a critical point
of the functional $E$.
\end{proof}

Let us note that hypotheses (H2) and (H3) require a certain behavior of the
functional $E$ only relative to an interval $[r,R]$. The situation in which
this behavior occurs on several such intervals leads us to the multiplicity
of critical points, with their location in disjoint annular conical sets.
Thus, Theorem~\ref{t1} directly yields the following multiplicity principle.

\begin{theorem}[Multiplicity]
\label{t2}Let condition (H1) hold.

\begin{description}
\item[(1$^{0}$)] If there are finite sequences of numbers $\left(
r_{k}\right) _{1\leq k\leq m}$ and $\left( R_{k}\right) _{1\leq k\leq m}$
with
\begin{equation*}
0<r_{1}<R_{1}<r_{2}<R_{2}<\dots <r_{m}<R_{m}
\end{equation*}%
such that conditions (H2) and (H3) are satisfied for every pair $\left(
r_{k},R_{k}\right) ,\ k=1,2,...,m,$ then there exist $m$ points $u_{k}^{\ast
}$ with%
\begin{equation*}
E^{\prime }\left( u_{k}^{\ast }\right) =0,\ \ u_{k}^{\ast }\in K,\ \
r_{k}<\left\vert u_{k}^{\ast }\right\vert <R_{k}\ \ \ \left(
k=1,2,...,m\right) .
\end{equation*}

\item[(2$^{0}$)] If there are increasing sequences of numbers $\left(
r_{k}\right) _{k\geq 1}$ and $\left( R_{k}\right) _{k\geq 1}$ with%
\begin{equation*}
0<r_{k}<R_{k}<r_{k+1}\ \ \left( k\geq 1\right) ,\ \ \ r_{k}\rightarrow
\infty \ \ \text{as }k\rightarrow \infty
\end{equation*}%
such that conditions (H2) and (H3) are satisfied for every pair $\left(
r_{k},R_{k}\right) ,\ k\geq 1,$ then there exists a sequence of points $%
\left( u_{k}^{\ast }\right) _{k\geq 1}$ with%
\begin{equation*}
E^{\prime }\left( u_{k}^{\ast }\right) =0,\ \ u_{k}^{\ast }\in K,\ \
r_{k}<\left\vert u_{k}^{\ast }\right\vert <R_{k},\ \ \left\vert u_{k}^{\ast
}\right\vert \rightarrow \infty \ \ \ \text{as }k\rightarrow \infty .
\end{equation*}

\item[(3$^{0}$)] If there are decreasing sequences of numbers $\left(
r_{k}\right) _{k\geq 1}$ and $\left( R_{k}\right) _{k\geq 1}$ with%
\begin{equation*}
0<R_{k+1}<r_{k}<R_{k}\ \ \left( k\geq 1\right) ,\ \ \ R_{k}\rightarrow 0\ \
\text{as }k\rightarrow \infty
\end{equation*}%
such that conditions (H2) and (H3) are satisfied for every pair $\left(
r_{k},R_{k}\right) ,\ k\geq 1,$ then there exists a sequence of points $%
\left( u_{k}^{\ast }\right) _{k\geq 1}$ with%
\begin{equation*}
E^{\prime }\left( u_{k}^{\ast }\right) =0,\ \ u_{k}^{\ast }\in K,\ \
r_{k}<\left\vert u_{k}^{\ast }\right\vert <R_{k},\ \ u_{k}^{\ast
}\rightarrow 0\ \ \ \text{as }k\rightarrow \infty .
\end{equation*}
\end{description}
\end{theorem}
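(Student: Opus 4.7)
The plan is to derive each of the three parts as an immediate consequence of Theorem~\ref{t1}, applied once for every annular pair $(r_k,R_k)$ appearing in the hypothesis. The only genuine work is bookkeeping: verifying that Theorem~\ref{t1} localizes the critical point in the \emph{open} annulus $r_k<|u|<R_k$, and that the disjointness of the prescribed intervals forces the resulting critical points to be distinct.

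For part (1$^0$), I would fix $k\in\{1,\dots,m\}$ and apply Theorem~\ref{t1} with $(r,R)=(r_k,R_k)$. Hypothesis (H1) is global and has been assumed once for the whole statement, while (H2) and (H3) are explicitly assumed for the specific pair $(r_k,R_k)$. The theorem thus produces $x_k^{\ast}\in\mathcal{N}_{r_k,R_k}$ with $E^{\prime}(x_k^{\ast})=0$. To see that the localization is strict, I would invoke the representation
\begin{equation*}
\mathcal{N}_{r_k,R_k}=\{s(u)u:u\in K\setminus\{0\}\}
\end{equation*}
noted after (H3), together with the fact that $s(u)$ lies in the \emph{open} interval $(r_k/|u|,R_k/|u|)$. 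Writing $x_k^{\ast}=s(v)v$ then yields $|x_k^{\ast}|=s(v)|v|\in(r_k,R_k)$, as required. The chain of inequalities $0<r_1<R_1<r_2<\cdots<R_m$ makes the annuli pairwise disjoint, so the $x_k^{\ast}$ are automatically distinct.

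Parts (2$^0$) and (3$^0$) proceed in exactly the same manner: Theorem~\ref{t1} is applied to every pair $(r_k,R_k)$, $k\geq 1$, giving a sequence $(x_k^{\ast})$ with $x_k^{\ast}\in K$, $E^{\prime}(x_k^{\ast})=0$ and $r_k<|x_k^{\ast}|<R_k$. The asymptotic conclusions are then immediate: in (2$^0$), $r_k<|x_k^{\ast}|$ combined with $r_k\to\infty$ forces $|x_k^{\ast}|\to\infty$; in (3$^0$), $|x_k^{\ast}|<R_k$ together with $R_k\to 0$ forces $x_k^{\ast}\to 0$ in norm.

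As for the main obstacle, there is essentially none: the theorem is purely an iterative corollary of Theorem~\ref{t1}, and all nontrivial analytic content (the cone version of Birkhoff--Kellogg and the strict-monotonicity argument forcing $\lambda_0=1$) has already been handled in that proof. The only delicate point worth recording is that the bounds on $|x_k^{\ast}|$ are strict rather than weak, which is exactly what the open-interval clause in (H3) guarantees and what ensures the critical points land in the interiors of disjoint annuli.
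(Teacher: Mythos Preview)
Your proposal is correct and follows exactly the paper's approach, which consists of a single sentence: ``The result follows directly by applying Theorem~\ref{t1} to each pair $(r_{k},R_{k})$.'' You simply add the bookkeeping that the paper leaves implicit---the strict localization via the open-interval clause in (H3), the disjointness of the annuli, and the routine squeeze arguments for the asymptotics in (2$^{0}$) and (3$^{0}$)---none of which departs from the paper's intent.
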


\begin{proof}
The result follows directly by applying Theorem \ref{t1} to each pair $%
(r_{k},R_{k}).$
\end{proof}

\section{Application}

To illustrate the theoretical results, we consider the Dirichlet problem for
a $p$-Laplace equation
\begin{equation}
\begin{cases}
-\left( \left\vert u^{\prime }\right\vert ^{p-2}u^{\prime }\right) ^{\prime
}(t)=f(u(t)),\quad t\in \left( 0,1\right) \\
u(0)=u(1)=0,%
\end{cases}
\label{pa}
\end{equation}%
where $1<p<\infty $, and $f:\mathbb{R}\rightarrow \mathbb{R}$ is a
continuous function that is nonnegative and nondecreasing on $\mathbb{R}%
_{+}. $

Consider the Banach space $X:=W_{0}^{1,p}\left( 0,1\right) $ endowed with
the usual norm $|u|_{1,p}:=|u^{\prime }|_{p},$ where $\left\vert \cdot
\right\vert _{p}$ is the norm of $L^{p}\left( 0,1\right) $. It is well known
(see, e.g., \cite[Chapter~1.2]{djm}) that $W_{0}^{1,p}(0,1)$ is a uniformly
convex and reflexive Banach space with $W_{0}^{-1,p^{\prime }}(0,1)$ its
dual, where $p^{\prime }$ is the conjugate of $p$, i.e., $\frac{1}{p}+\frac{1%
}{p^{\prime }}=1$. If $\langle \cdot ,\cdot \rangle $ denotes the duality
pairing between $W_{0}^{-1,p^{\prime }}(0,1)$ and $W_{0}^{1,p}(0,1),$ and $%
v\in L^{q}(0,1)\subset W_{0}^{-1,p^{\prime }}(0,1)$, then
\begin{equation*}
\langle v,u\rangle =\int_{0}^{1}v(t)u(t)dt,
\end{equation*}%
for all $u\in W_{0}^{1,p}(0,1)$ (see \cite[Proposition~8.14]{b}).

The duality mapping of $W_{0}^{1,p}\left( 0,1\right) $ is%
\begin{equation*}
J\left( u\right) =-\left( \left\vert u^{\prime }\right\vert ^{p-2}u^{\prime
}\right) ^{\prime }.
\end{equation*}

Let $\lambda _{p}$ denote the first eigenvalue of the Euler-Lagrange
equation
\begin{equation*}
J\left( u\right) =\lambda |u|_{1,p}^{p-2}u\quad \text{in }(0,1),\quad
u(0)=u(1)=0.
\end{equation*}%
It is known (see, e.g., \cite{djm}) that
\begin{equation*}
\lambda _{p}=\min_{u\in W_{0}^{1,p}\left( 0,1\right) \setminus \{0\}}\frac{%
|u|_{1,p}^{p}}{|u|_{p}^{p}},
\end{equation*}%
that is, $c_{p}=\lambda _{p}^{-\frac{1}{p}}$ is the smallest constant such
that
\begin{equation}
\left\vert u\right\vert _{p}\leq c_{p}|u|_{1,p},  \label{f8}
\end{equation}%
for all $u\in W_{0}^{1,p}(0,1)$. Also, for the continuous embedding $%
W_{0}^{1,p}(0,1)\subset C[0,1]$ one has
\begin{equation*}
\left\vert u(t)\right\vert \leq |u|_{1,p}\ \ \ \left( t\in \left[ 0,1\right]
\right) ,
\end{equation*}%
for every $u\in W_{0}^{1,p}(0,1)$.


The energy functional of the problem \eqref{pa} is
\begin{equation*}
E(u)=\frac{1}{p}\left\vert u\right\vert _{1,p}^{p}-\int_{0}^{1}F(u(t))dt,
\end{equation*}%
where $F(\xi )=\int_{0}^{\xi }f(s)ds$. One has
\begin{equation*}
E^{\prime }(u)=J\left( u\right) -N_{f}\left( u\right) ,
\end{equation*}%
where $N_{f}$ is the Nemytskii superposition operator $N_{f}\left( u\right)
:=f(u).$ Hence the solutions of problem (\ref{pa}) are the critical points
of the functional $E.$

In $W_{0}^{1,p}(0,1)$, we consider the cone
\begin{equation*}
\begin{aligned} K = \Big\{ u \in W_0^{1,p}(0,1) \, : \, &\  u \geq 0, \,u
\text{ is nondecreasing on } [0,1/2], \\ & u(t) = u(1-t)\text{ and } u(t)
\geq \phi(t) |u|_{1,p} \text{ for all } t \in [0,1/2] \Big\}, \end{aligned}
\end{equation*}%
where $\phi $ is the function involved in the energetic Harnack inequality (%
\ref{ih}), namely%
\begin{equation*}
\phi :\left[ 0,\frac{1}{2}\right] \rightarrow \mathbb{R},\text{ \quad $\phi
(t)=t(1-2t)^{1/(p-1)}$.}
\end{equation*}

%
%

Let $\beta \in (0,1/4)$ be fixed, and denote
\begin{equation*}
\Phi :=\int_{\beta }^{1/2}\phi (t)dt.
\end{equation*}%
For $0<r<R<\infty $, we assume that the following conditions hold:

\begin{description}
\item[(h1)] The function $f$ satisfies the inequalities
\begin{equation}  \label{conditie r}
\frac{f(r)}{r^{p-1}}<\frac{1}{c_{p}}\quad \text{ and }\quad \frac{f(R\phi
(\beta ))}{ R^{p-1}}>\frac{1}{2\Phi }.
\end{equation}

\item[(h2)] The function
\begin{equation*}
g\left( t\right) :=\frac{f(t)}{t^{p-1}}\,\text{ }
\end{equation*}%
is strictly increasing on $(0,R].$
\end{description}

Given the above two conditions, the following existence result holds.

\begin{theorem}
\label{pa1} Under conditions (h1) and (h2), problem~\eqref{pa} admits a
solution $u\in K$ such that
\begin{equation*}
r<|u|_{1,p}<R.
\end{equation*}
\end{theorem}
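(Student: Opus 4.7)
The plan is to verify hypotheses (H1)--(H3) of Theorem~\ref{t1} for the space $X=W_0^{1,p}(0,1)$ with the cone $K$ and the functional $E$ introduced above, and then invoke Theorem~\ref{t1} to obtain a critical point $u^{\ast}\in\mathcal{N}_{r,R}$, which by (H3) automatically satisfies $r<|u^{\ast}|_{1,p}<R$ and is therefore a solution of~\eqref{pa} of the required type. A direct calculation gives $N(u)=J(u)-E'(u)=N_{f}(u)$, so $T(u)=J^{-1}(N_{f}(u))$ is the solution operator of $-(|v'|^{p-2}v')'=f(u)$, $v(0)=v(1)=0$. Complete continuity of $T$ should follow from the compact embedding $W_0^{1,p}(0,1)\hookrightarrow C[0,1]$ (which makes $N_{f}$ compact into $W^{-1,p'}(0,1)$) combined with the continuity of $J^{-1}$ from Proposition~\ref{pdm}(f). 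For the cone invariance $T(K)\subset K$, when $u\in K$ the datum $f(u)$ is nonnegative, continuous, symmetric about $t=1/2$, and nondecreasing on $[0,1/2]$; standard symmetrization/monotonicity reasoning for the $p$-Laplacian then forces $v=T(u)$ to inherit the same positivity, symmetry, and monotonicity, and the Harnack-type lower bound $v(t)\ge\phi(t)|v|_{1,p}$ is then exactly Lemma~\ref{Harnack}.

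Verification of (H2) should be brief: for $u\in\mathcal{N}_{r,R}$, the Nehari relation gives
\[
|u|_{1,p}^{p}=\langle N_{f}(u),u\rangle=\langle J(T(u)),u\rangle\le|T(u)|_{1,p}^{p-1}\,|u|_{1,p},
\]
hence $|T(u)|_{1,p}\ge|u|_{1,p}\ge r>0$, uniformly on $\mathcal{N}_{r,R}$. For (H3), I would study
\[
\alpha_{u}'(\sigma)=\sigma^{p-1}|u|_{1,p}^{p}-\int_{0}^{1}f(\sigma u)u\,dt
\]
at $\sigma=r/|u|_{1,p}$ and $\sigma=R/|u|_{1,p}$. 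At the left endpoint, the pointwise bound $\sigma u(t)\le r$, monotonicity of $f$, and the chain $\int_{0}^{1}u\,dt\le|u|_{p}\le c_{p}|u|_{1,p}$ yield $\int_{0}^{1}f(\sigma u)u\,dt\le c_{p}f(r)|u|_{1,p}$, so the first inequality in (h1) gives $\alpha_{u}'(r/|u|_{1,p})>0$. At the right endpoint, the Harnack bound together with $u$ nondecreasing on $[0,1/2]$ implies $\sigma u(t)\ge R\phi(\beta)$ on $[\beta,1/2]$; combining this with symmetry and the definition of $\Phi$ produces $\int_{0}^{1}f(\sigma u)u\,dt\ge 2\Phi f(R\phi(\beta))|u|_{1,p}$, and the second inequality in (h1) yields $\alpha_{u}'(R/|u|_{1,p})<0$. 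Uniqueness of $s(u)$ and the sign pattern required in (H3) follow from strict monotonicity of $\sigma\mapsto\int_{0}^{1}g(\sigma u)u^{p}\,dt$, which is a direct consequence of (h2) together with the fact that $u>0$ on a set of positive measure (again by Lemma~\ref{Harnack}).

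I expect the most delicate point to be the cone invariance of $T$ inside (H1): establishing symmetry, monotonicity on $[0,1/2]$, and the Harnack-type lower bound for $v=T(u)$ from the corresponding properties of $u$ all rely on the specific structure of the one-dimensional $p$-Laplacian, the monotonicity assumption on $f$, and Lemma~\ref{Harnack}. Once (H1)--(H3) are in place, Theorem~\ref{t1} directly produces the desired $u^{\ast}\in K$ with $E'(u^{\ast})=0$ and $r<|u^{\ast}|_{1,p}<R$.
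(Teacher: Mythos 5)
Your proposal is correct, and for (H1) and (H3) it follows essentially the paper's own route: complete continuity via the compact embedding $W_0^{1,p}(0,1)\hookrightarrow C[0,1]$ plus continuity of $J^{-1}$; cone invariance via positivity (comparison principle), symmetry (uniqueness of the Dirichlet problem), and Lemma~\ref{Harnack} applied to $v=T(u)$, noting $J(v)=f(u)$ is nonnegative and nondecreasing on $[0,1/2]$ because $u$ is nondecreasing and $f$ is nondecreasing (your sketch leaves these details, including the monotonicity of $v$ itself on $[0,1/2]$, implicit, but they are exactly the ones the paper fills in); the endpoint estimates for $\alpha_u'$ are identical to the paper's, and your monotonicity of $\sigma\mapsto\int_0^1 g(\sigma u)u^p\,dt$ is the paper's strict decrease of $h(\gamma)=1-\int_0^1 \gamma^{1-p}f(\gamma w)w\,dt$ after the substitution $\gamma=\sigma|u|_{1,p}$. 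Where you genuinely diverge is (H2): the paper argues by contradiction, taking $u_k\in\mathcal{N}_{r,R}$ with $T(u_k)\to 0$, using demicontinuity of $J$ to get $N_f(u_k)\rightharpoonup 0$, and then a Harnack-based lower bound $\langle N_f(u_k),\chi\rangle\ge 2|\chi|_{1,p}\Phi f(\phi(\beta)r)$ to contradict the strict positivity of $f$ on $(0,R]$ coming from (h2). Your argument is a direct duality estimate: for $u\in\mathcal{N}_{r,R}$, $|u|_{1,p}^p=\langle N_f(u),u\rangle=\langle J(T(u)),u\rangle\le \varphi\left(|T(u)|_{1,p}\right)|u|_{1,p}=|T(u)|_{1,p}^{p-1}|u|_{1,p}$, hence $|T(u)|_{1,p}\ge|u|_{1,p}\ge r$. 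This is simpler and stronger: it gives a quantitative lower bound $r$, uses neither the Harnack inequality nor the positivity of $f$ (so it is independent of (h2)), and in fact it works verbatim in the abstract setting of Theorem~\ref{t1} for any normalization function, showing that (H2) is automatically satisfied on the Nehari set whenever $T=J^{-1}N$ — a genuine simplification over the paper's verification.
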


\begin{proof}
We apply Theorem \ref{t1}.

\textit{Check of }(H1)\textit{.}

(a)\textit{Complete continuity of the operator $J^{-1}N_{f}$.}The operator
$J^{-1}N_{f}$ is completely continuous from $W_{0}^{1,p}(0,1)$ to itself.
Indeed, since $W_{0}^{1,p}(0,1)$ compactly embeds into $C\left[ 0,1\right] $
(see, e.g., \cite[Theorem 8.8]{b}), and $C\left[ 0,1\right] $ continuously
embeds into $W^{-1,p^{\prime }}(0,1)$, the continuity of $f$ ensures that
the Nemytskii operator $N_{f}$ is completely continuous from $%
W_{0}^{1,p}(0,1)$ to $W^{-1,p^{\prime }}(0,1)$. Finally, since $J^{-1}$ is a
continuous bounded operator, it follows that $J^{-1}N_{f}$ is completely
continuous, as claimed.

(b) \textit{Invariance of the cone. }
We next show that the cone \(K\) is invariant under the operator \(J^{-1}N_{f}\), i.e., $J^{-1}N_{f}(K)\subset K$.
Let \(u \in K\) and set \(v = J^{-1}N_{f}(u)\).
We claim that \(v \in K\).

By the comparison principle for the $p$-Laplace operator (see, e.g., \cite[%
Lemma~1.3]{ac}), since $N_{f}\left( u\right) \geq 0$, it follows that $%
J^{-1}N_{f}\left( u\right) \geq 0$. To prove that $v$ is symmetric, denote $%
w\left( t\right) :=v\left( 1-t\right) .$ Since $u$ is symmetric, so is $f(u)$%
, and hence
\begin{equation*}
N_{f}(u(1-t))=N_{f}(u(t)).
\end{equation*}%
Moreover, we have
\begin{equation*}
J(v)(1-t)=N_{f}(u(1-t)),
\end{equation*}%
and
\begin{equation*}
J(v)(1-t)=J(w)(t).
\end{equation*}%
Therefore, 
\begin{equation*}
J\left( w\right) =N_{f}\left( u\right) ,
\end{equation*}%
which shows that both $v$ and $w$ solve the same Dirichlet problem for the $%
p $-Laplace equation. By uniqueness of the solution to this problem, it
follows that $v=w$, that is, $v$ is symmetric. Finally, we observe that $%
J\left( v\right) =J(J^{-1}N_{f}\left( u\right) )=N_{f}\left( u\right) $,
which is nonnegative and nondecreasing on $[0,1/2]$. Therefore, Lemma \ref%
{Harnack} applies and guarantees that the inequality \eqref{ih} holds for $v$%
. Consequently, $v\in K$ as claimed. Hence condition (H1) holds.

\textit{Check of }(H2)\textit{.} Suppose, by contradiction, that (H2) does
not hold. Then, one can find a sequence $(u_{k})\subset \mathcal{N}_{r,R}$
such that
\begin{equation}
J^{-1}N_{f}\left( u_{k}\right) \rightarrow 0.
\label{convergenta la zero J minus 1}
\end{equation}%
Since $J$ is demicontinuous (Proposition \ref{pdm} (e)), relation
\eqref{convergenta la zero
J minus 1} implies that
\begin{equation*}
J(J^{-1}N_{f}\left( u_{k}\right) )=N_{f}\left( u_{k}\right) \rightharpoonup
0,
\end{equation*}%
weakly. Therefore, for any given $\chi \in K\setminus \{0\}$, we have
\begin{equation*}
\langle N_{f}\left( u_{k}\right) ,\chi \rangle \rightarrow 0.
\end{equation*}%
From the Harnack inequality and the monotonicity of the functions $u_{k}$
and $\chi $ on the interval $\left[ 0,1/2\right] ,$ for every $t\in (\beta
,1/2),$ we have
\begin{align}
& u_{k}(t)\geq u_{k}\left( \beta \right) \geq \phi (\beta )|u_{k}|_{1,p},\,%
\text{ and }  \notag \\
& \chi (t)\geq \phi (t)|\chi |_{1,p}.  \label{ineq-uk}
\end{align}%
Using the monotonicity of $f$, the symmetry of $u_{k}$ and $\chi $, and the
bounds in \eqref{ineq-uk}, we obtain
\begin{align*}
\langle N_{f}\left( u_{k}\right) ,\chi \rangle &
=\int_{0}^{1}f(u_{k}(t))\chi (t)\,dt\geq 2\int_{\beta }^{1/2}f\left(
u_{k}(t)\right) \chi (t)\,dt \\
& \geq 2|\chi |_{1,p}f(\phi (\beta )|u_{k}|_{1,p})\int_{\beta }^{1/2}\phi
(t)\,dt=2|\chi |_{1,p}f(\phi (\beta )|u_{k}|_{1,p})\Phi .
\end{align*}%
Since $\left\vert u_{k}\right\vert _{1,p}\geq r$ for all $k,$ we conclude
that
\begin{equation*}
0\leq 2|\chi |_{1,p}f(\phi (\beta )r)\Phi \leq \langle N_{f}\left(
u_{k}\right) ,\chi \rangle \rightarrow 0,
\end{equation*}%
whence $f(\phi (\beta )r)=0,$ which contradicts the strict positivity of $f$
on $(0,R]$ implied by (h2). Hence, (H2) holds.

\textit{Check of }(H3)\textit{.} Let $u\in K_{r,R}$ and denote $w:=\frac{u}{%
|u|_{1,p}}.$ We immediately see that the derivative of the mapping $\alpha
_{u}$ defined in \eqref{alpha} is
\begin{equation*}
\alpha _{u}^{\prime }\left( \sigma \right) =\sigma
^{p-1}|u|_{1,p}^{p}-\int_{0}^{1}f(\sigma u(t))u(t)\,dt.
\end{equation*}%
We claim that
\begin{equation}
\alpha _{u}^{\prime }\left( \frac{r}{|u|_{1,p}}\right) >0\quad \text{and}%
\quad \alpha _{u}^{\prime }\left( \frac{R}{|u|_{1,p}}\right) <0.
\label{capete}
\end{equation}%
Since $w(t)\leq 1$ for all $t\in \lbrack 0,1]$, it follows that
\begin{equation}
f(rw(t))\leq f(r)\ \ \ \text{for all\ }t\in \left[ 0,1\right] .  \label{f7}
\end{equation}%
Moreover, by H\"{o}lder's inequality and (\ref{f8}), we have
\begin{equation}
\int_{0}^{1}w(t)\,dt\leq \left( \int_{0}^{1}w(t)^{p}\,dt\right) ^{1/p}\leq
c_{p}|w|_{1,p}=c_{p},  \label{f9}
\end{equation}%
since $|w|_{1,p}=1$ by definition. Thus, using (\ref{f7}), (\ref{f9}) and
the first inequality in \eqref{conditie r}, we obtain
\begin{align*}
\alpha _{u}^{\prime }\left( \frac{r}{|u|_{1,p}}\right) &
=r^{p-1}|u|_{1,p}-\int_{0}^{1}f(rw(t))u(t)dt \\
& =|u|_{1,p}\left( r^{p-1}-\int_{0}^{1}f(rw(t))w(t)dt\right) \\
& \geq |u|_{1,p}\left( r^{p-1}-f(r)\int_{0}^{1}w(t)dt\right) \\
& \geq |u|_{1,p}\left( r^{p-1}-c_{p}f(r)\right) \\
& >0,
\end{align*}%
that is, the first inequality in (\ref{capete}). To prove the second
inequality in (\ref{capete}), note that the monotonicity of $w$ on $[0,1/2]$%
, together with the Harnack inequality, yields that
\begin{equation}
w(t)\geq \phi (\beta )\quad \text{ for all $t\in \lbrack \beta ,1/2]$}.
\label{inegalitate w}
\end{equation}%
Using the symmetry of $w$ and the second inequality in \eqref{conditie r},
we find that
\begin{align*}
\alpha _{u}^{\prime }\left( \frac{R}{|u|_{1,p}}\right) &
=R^{p-1}|u|_{1,p}-\int_{0}^{1}f(Rw(t))u(t)dt \\
& =|u|_{1,p}\left( R^{p-1}-2\int_{0}^{1/2}f(Rw(t))w(t)dt\right) \\
& \leq |u|_{1,p}\left( R^{p-1}-2\int_{\beta }^{1/2}f(R\phi (t))\phi
(t)dt\right) \\
& \leq |u|_{1,p}\left( R^{p-1}-2\Phi f(R\phi (\beta ))\right) \\
& <0.
\end{align*}%
Consequently, the second inequality in \eqref{capete} also holds.

To continue with the verification of (H3), let us denote $w:=\frac{u}{%
|u|_{1,p}}$ and $\gamma :=\sigma \left\vert u\right\vert _{1,p}$. Then,
\begin{align*}
\alpha _{u}^{\prime }(\sigma )& =\sigma ^{p-1}\left\vert u\right\vert
_{1,p}^{p}-\int_{0}^{1}f\left( \sigma u\left( t\right) \right) u\left(
t\right) dt \\
& =\sigma ^{p-1}\left\vert u\right\vert _{1,p}^{p}\left( 1-\int_{0}^{1}\frac{%
f\left( \gamma w\left( t\right) \right) }{\gamma ^{p-1}}w\left( t\right)
dt\right) \\
& =\sigma ^{p-1}|u|_{1,p}^{p}h(\gamma ),
\end{align*}%
where
\begin{equation*}
h(\gamma ):=1-\int_{0}^{1}\frac{f(\gamma w(t))}{\gamma ^{p-1}}w(t)dt\ \ \
\left( \gamma \in \left[ r,R\right] \right) .
\end{equation*}%
We now show that the function $h$ is strictly decreasing on $\left[ r,R%
\right] .$ For this, let $r\leq \gamma _{1}<\gamma _{2}\leq R.$ One has%
\begin{equation*}
h(\gamma _{1})-h(\gamma _{2})=2\int_{0}^{\frac{1}{2}}\left( \frac{f(\gamma
_{2}w(t))}{\gamma _{2}^{p-1}}-\frac{f(\gamma _{1}w(t))}{\gamma _{1}^{p-1}}%
\right) w\left( t\right) dt.
\end{equation*}%
%
%
%
%
%
%
Since $0<w(t)\leq 1$ for all $t\in (0,1/2]$, it follows that
\begin{equation*}
0<\gamma _{1}w(t)<\gamma _{2}w(t)\leq R.
\end{equation*}%
Then, using assumption (h2), we have
\begin{equation*}
g(\gamma _{2}w(t))>g(\gamma _{1}w(t)),
\end{equation*}%
which implies
\begin{align*}
\left( \frac{f(\gamma _{2}w(t))}{\gamma _{2}^{p-1}}-\frac{f(\gamma _{1}w(t))%
}{\gamma _{1}^{p-1}}\right) w(t)& =\left( \frac{f(\gamma _{2}w(t))}{\gamma
_{2}^{p-1}w(t)^{p-1}}-\frac{f(\gamma _{1}w(t))}{\gamma _{1}^{p-1}w(t)^{p-1}}%
\right) w(t)^{p} \\
& =\left( g(\gamma _{2}w(t))-g(\gamma _{1}w(t))\right) w(t)^{p} \\
& >0,
\end{align*}%
for all $t\in (0,1/2]$. Therefore, $h\left( \gamma _{1}\right) >h\left(
\gamma _{2}\right) $ and thus $h$ is strictly decreasing on $\left[ r,R%
\right] .$ Moreover, since $h\left( r\right) >0$ and $h\left( R\right) <0,$
it follows that $h$ has exactly one zero $\gamma _{0}$ in $\left( r,R\right)
.$

Consequently, the derivative $\alpha _{u}^{\prime }$ has a unique zero in $%
\left[ \frac{r}{\left\vert u\right\vert _{1,p}},\frac{R}{\left\vert
u\right\vert _{1,p}}\right] $, namely
\begin{equation*}
s(u)=\frac{\gamma _{0}}{|u|_{1,p}},
\end{equation*}
so condition (H3) is satisfied.

Since all the conditions (H1)-(H3) are satisfied, Theorem \ref{t1} applies
and gives the conclusion.
\end{proof}

Instead of condition (h2), we may consider an alternative assumption
formulated in relation to the annular conical set $K_{r,R}$. More exactly,

\begin{enumerate}
\item[\textbf{(h2')}] The function $f$ is of class $C^{1}$ on $\mathbb{R}$
and
\begin{equation}
\min_{t\in \lbrack r\phi (\beta ),R]}f^{\prime }(t)>\frac{(p-1)R^{p-2}}{%
2\Psi }\text{ },  \label{ider}
\end{equation}%
where
\begin{equation*}
\Psi =\int_{\beta }^{1/2}\phi (t)^{2}dt.
\end{equation*}
\end{enumerate}

\begin{theorem}
Under conditions (h1) and (h2'), the problem (\ref{pa}) has a solution $u\in
K$ satisfying%
\begin{equation*}
r<\left\vert u\right\vert _{1,p}<R.
\end{equation*}
\end{theorem}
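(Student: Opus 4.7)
The plan is to re-apply Theorem \ref{t1} with the same cone $K$ and energy functional $E$ used in the proof of Theorem \ref{pa1}. The verification of (H1) is unchanged, and the verification of (H2) carries over almost verbatim except for the very last step, where the contradiction required strict positivity of $f$ at $r\phi(\beta)$. Under (h2') this still holds: if $f(r\phi(\beta))$ were zero, the nonnegativity and monotonicity of $f$ on $\mathbb{R}_+$ would force $f\equiv 0$ on $[0,r\phi(\beta)]$, and the $C^1$ hypothesis would then give $f'(r\phi(\beta))=0$, contradicting the positive lower bound on $\min_{[r\phi(\beta),R]} f'$ imposed by (h2').

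The substantively new work is the verification of (H3). The endpoint inequalities $\alpha_u'(r/|u|_{1,p})>0$ and $\alpha_u'(R/|u|_{1,p})<0$ follow from (h1) exactly as in the proof of Theorem \ref{pa1}, so I only need to secure the existence of a unique interior zero of $\alpha_u'$ with the correct sign behavior. The plan is to prove the stronger statement that $\alpha_u'$ is strictly decreasing on $[r/|u|_{1,p},R/|u|_{1,p}]$ by showing
\begin{equation*}
\alpha_u''(\sigma)=(p-1)\sigma^{p-2}|u|_{1,p}^{p}-\int_0^1 f'(\sigma u(t))\,u(t)^2\,dt<0
\end{equation*}
on that interval, which makes sense because $f\in C^{1}$ under (h2').

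The key estimate uses the structure of the cone $K$. For $\sigma\in[r/|u|_{1,p},R/|u|_{1,p}]$ and $t\in[\beta,1/2]$, the Harnack bound $u(t)\geq\phi(\beta)|u|_{1,p}$ together with the embedding $|u(t)|\leq|u|_{1,p}$ yield $\sigma u(t)\in[r\phi(\beta),R]$, hence $f'(\sigma u(t))\geq\min_{[r\phi(\beta),R]} f'$. Combined with the sharper pointwise bound $u(t)\geq\phi(t)|u|_{1,p}$ on $[\beta,1/2]$ and the symmetry of $u$, this gives
\begin{equation*}
\int_0^1 f'(\sigma u(t))\,u(t)^2\,dt\geq 2\Psi\,|u|_{1,p}^2\min_{[r\phi(\beta),R]} f'.
\end{equation*}
Using $(\sigma|u|_{1,p})^{p-2}\leq R^{p-2}$, which comes from $\sigma|u|_{1,p}\leq R$, hypothesis (h2') then produces $\alpha_u''(\sigma)<0$, and (H3) follows. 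The main obstacle I expect is precisely this last step of matching $(\sigma|u|_{1,p})^{p-2}$ with $R^{p-2}$: it is immediate for $p\geq 2$, and will need a small adjustment (working with $\sigma|u|_{1,p}\geq r$ in place of $\sigma|u|_{1,p}\leq R$) when $1<p<2$. Once (H3) is in hand, Theorem \ref{t1} delivers a critical point $u\in\mathcal{N}_{r,R}\subset K$ with $r<|u|_{1,p}<R$, which is the desired solution of \eqref{pa}.
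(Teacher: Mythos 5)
Your proposal is correct and follows essentially the same route as the paper: (H1) is unchanged, (H2) is secured by the same observation that positivity of $f'$ on $[r\phi(\beta),R]$ forces $f(r\phi(\beta))>0$, and (H3) is obtained by showing $\alpha_u'$ is strictly decreasing, your bound on $\alpha_u''(\sigma)$ being exactly the paper's estimate of $\tilde{h}'(\gamma)$ after the substitution $\gamma=\sigma\left\vert u\right\vert_{1,p}$, with the same use of the Harnack inequality to get the factor $2\Psi$ and of (h2') to conclude negativity. The wrinkle you flag for $1<p<2$, namely that $\gamma^{p-2}\leq R^{p-2}$ holds only when $p\geq 2$, is silently present in the paper's own proof as well, so on that point your write-up is, if anything, slightly more careful than the original.
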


\begin{proof}
Similar to the proof of Theorem \ref{pa1}, assumption (H1) from Theorem \ref%
{t1} is satisfied. In addition, it is easy to see that the strict positivity
of $f^{\prime }\left( r\phi \left( \beta \right) \right) $ implies $f\left(
r\phi \left( \beta \right) \right) >0,$ which as above guarantees (H2).
Moreover, by (h1), relation \eqref{capete} also holds. To verify condition
(H3), it remains to prove that $\alpha _{u}^{\prime }$ has a unique zero $%
s\left( u\right) $ within the interval $\left[ \frac{r}{|u|_{1,p}},\frac{R}{%
|u|_{1,p}}\right] .$ Under the notations from the proof of the previous
theorem, consider the function
\begin{equation*}
\Tilde{h}(\gamma ):=\gamma ^{p-1}-\int_{0}^{1}f(\gamma w(t))w(t)dt=\gamma
^{p-1}h\left( \gamma \right) \ \ \ \left( \gamma \in \left[ r,R\right]
\right) .
\end{equation*}%
Then, we have
\begin{equation}
\alpha _{u}^{\prime }(\sigma )=|u|_{1,p}\Tilde{h}(\sigma |u|_{1,p})=|u|_{1,p}%
\Tilde{h}(\gamma ).  \label{alpha_prim_u_aplicatie}
\end{equation}%
We now show that $\Tilde{h}$ is strictly decreasing on $\left[ r,R\right] $.
Since $f$ is of class $C^{1}$, it suffices to show that
\begin{equation*}
\Tilde{h}^{\prime }(\gamma )<0,\quad \text{for all }\gamma \in \lbrack r,R].
\end{equation*}%
Differentiating, and using that $f$ is nondecreasing, we obtain
\begin{align*}
\Tilde{h}^{\prime }\left( \gamma \right) & =\left( p-1\right) \gamma
^{p-2}-2\int_{0}^{1/2}f^{\prime }(\gamma w\left( t\right) )w\left( t\right)
^{2}dt \\
& \leq \left( p-1\right) \gamma ^{p-2}-2\int_{\beta }^{1/2}f^{\prime
}(\gamma w\left( t\right) )w\left( t\right) ^{2}dt.
\end{align*}%
For $t\in \left[ \beta ,1/2\right] ,$ one has $r\phi \left( \beta \right)
\leq \gamma w\left( t\right) \leq R,$ whence%
\begin{equation*}
f^{\prime }(\gamma w\left( t\right) )\geq \min_{s\in \left[ r\phi \left(
\beta \right) ,R\right] }f^{\prime }\left( s\right) .
\end{equation*}%
Then, also using (\ref{ider}), we obtain
\begin{equation*}
\Tilde{h}^{\prime }\left( \gamma \right) \leq \left( p-1\right)
R^{p-2}-2\Psi \min_{s\in \left[ r\phi \left( \beta \right) ,R\right]
}f^{\prime }\left( s\right) <0,
\end{equation*}%
as we desired. 
Finally, by \eqref{capete} and \eqref{alpha_prim_u_aplicatie}, we conclude
that $\alpha _{u}^{\prime }$ has a unique zero within the interval $\left[
\frac{r}{|u|_{1,p}},\frac{R}{|u|_{1,p}}\right] .$ So condition (H3) is
verified.

Therefore, Theorem \ref{t1} applies and gives the conclusion.
\end{proof}

\begin{remark}
Condition (h2) given on the whole interval $(0,R]$ does not lead to
multiplicity. To see why, suppose there are two pairs $(r_{i},R_{i})$, $%
i=1,2 $, with $0<r_{1}<R_{1}<r_{2}<R_{2}$ for which conditions (h1) and (h2)
are both satisfied. By assumption (h1), we have
\begin{equation*}
\frac{f(r_{2})}{r_{2}^{p-1}}<\frac{1}{c_{p}}.
\end{equation*}%
On the other hand, (h2) yields
\begin{equation*}
\frac{f(R_{1})}{R_{1}^{p-1}}\leq \frac{f(r_{2})}{r_{2}^{p-1}}<\frac{1}{c_{p}}%
.
\end{equation*}%
This implies
\begin{align*}
\alpha _{u}^{\prime }\left( \frac{R_{1}}{|u|_{1,p}}\right) &
=R_{1}^{p-1}|u|_{1,p}-\int_{0}^{1}f(R_{1}w(t))u(t)dt \\
& =|u|_{1,p}\left( R_{1}^{p-1}-\int_{0}^{1}f(R_{1}w(t))w(t)dt\right) \\
& \geq |u|_{1,p}\left( R_{1}^{p-1}-c_{p}f(R_{1})\right) \\
& >0,
\end{align*}%
which contradicts \eqref{capete}. However, condition (h2') can be applied
separately to each of several disjoint annular sets, which leads to multiple
solutions, as shown by the following result illustrating the general
multiplicity principle given by Theorem~\ref{t2}
\end{remark}

\begin{theorem}

\begin{description}
\item[(1$^{0}$)] If there are finite sequences of numbers $\left(
r_{k}\right) _{1\leq k\leq m}$ and $\left( R_{k}\right) _{1\leq k\leq m}$
with
\begin{equation*}
0<r_{1}<R_{1}<r_{2}<R_{2}<\dots <r_{m}<R_{m}
\end{equation*}%
such that conditions (h1) and (h2') are satisfied for every pair $\left(
r_{k},R_{k}\right) ,\ k=1,2,...,m,$ then there exist $m$ solutions $%
u_{k}^{\ast }$ of problem (\ref{pa}) with%
\begin{equation*}
u_{k}^{\ast }\in K,\ \ r_{k}<\left\vert u_{k}^{\ast }\right\vert
_{1,p}<R_{k}\ \ \ \left( k=1,2,...,m\right) .
\end{equation*}

\item[(2$^{0}$)] If there are increasing sequences of numbers $\left(
r_{k}\right) _{k\geq 1}$ and $\left( R_{k}\right) _{k\geq 1}$ with%
\begin{equation*}
0<r_{k}<R_{k}<r_{k+1}\ \ \left( k\geq 1\right) ,\ \ \ r_{k}\rightarrow
\infty \ \ \text{as }k\rightarrow \infty
\end{equation*}%
such that conditions (h1) and (h2') are satisfied for every pair $\left(
r_{k},R_{k}\right) ,\ k\geq 1,$ then there exists a sequence of solutions $%
\left( u_{k}^{\ast }\right) _{k\geq 1}$ of problem (\ref{pa}) with%
\begin{equation*}
u_{k}^{\ast }\in K,\ \ r_{k}<\left\vert u_{k}^{\ast }\right\vert
_{1,p}<R_{k},\ \ \left\vert u_{k}^{\ast }\right\vert _{1,p}\rightarrow
\infty \ \ \ \text{as }k\rightarrow \infty .
\end{equation*}

\item[(3$^{0}$)] If there are decreasing sequences of numbers $\left(
r_{k}\right) _{k\geq 1}$ and $\left( R_{k}\right) _{k\geq 1}$ with%
\begin{equation*}
0<R_{k+1}<r_{k}<R_{k}\ \ \left( k\geq 1\right) ,\ \ \ R_{k}\rightarrow 0\ \
\text{as }k\rightarrow \infty
\end{equation*}%
such that conditions (h1) and (h2') are satisfied for every pair $\left(
r_{k},R_{k}\right) ,\ k\geq 1,$ then there exists a sequence of solutions $%
\left( u_{k}^{\ast }\right) _{k\geq 1}$ of problem (\ref{pa}) with%
\begin{equation*}
u_{k}^{\ast }\in K,\ \ r_{k}<\left\vert u_{k}^{\ast }\right\vert
_{1,p}<R_{k},\ \ u_{k}^{\ast }\rightarrow 0\ \ \ \text{as }k\rightarrow
\infty .
\end{equation*}
\end{description}
\end{theorem}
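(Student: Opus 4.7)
The plan is to mimic exactly the derivation of Theorem~\ref{t2} from Theorem~\ref{t1}: namely, to apply the preceding single-existence theorem (the one based on hypotheses (h1) and (h2$^\prime$)) to each pair $(r_k,R_k)$ individually. Since (h1) and (h2$^\prime$) are assumed to hold for every such pair, that theorem furnishes, for each admissible index $k$, a solution $u_k^{\ast}\in K$ of problem~\eqref{pa} satisfying
\begin{equation*}
r_k<\lvert u_k^{\ast}\rvert_{1,p}<R_k.
\end{equation*}

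For part (1$^0$), the strict inequalities $0<r_1<R_1<r_2<R_2<\dots<r_m<R_m$ guarantee that the annular conical sets $K_{r_k,R_k}$ are pairwise disjoint, so the elements $u_1^{\ast},\dots,u_m^{\ast}$ are automatically $m$ distinct solutions in $K$ with the claimed norm localization. For part (2$^0$), the assumption $r_k<R_k<r_{k+1}$ again yields disjoint annular sets and hence pairwise distinct solutions $u_k^{\ast}$, while the squeeze $r_k<\lvert u_k^{\ast}\rvert_{1,p}<R_k$ combined with $r_k\to\infty$ forces $\lvert u_k^{\ast}\rvert_{1,p}\to\infty$. For part (3$^0$), the decreasing ordering $R_{k+1}<r_k<R_k$ similarly produces disjoint annuli and pairwise distinct solutions, and from $\lvert u_k^{\ast}\rvert_{1,p}<R_k\to 0$ we conclude $\lvert u_k^{\ast}\rvert_{1,p}\to 0$.

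There is no real obstacle in this proof: the entire theorem is a direct packaging of three applications of the previous existence result, with the ordering hypotheses on $(r_k)$ and $(R_k)$ ensuring both distinctness and the asymptotic conclusions. The only mild point to note is that in parts (2$^0$) and (3$^0$) one should emphasize that the norm bounds $r_k<\lvert u_k^{\ast}\rvert_{1,p}<R_k$, together with the monotone behavior of $r_k$ or $R_k$, are what drive the asymptotics; no further compactness or passage-to-the-limit argument is required, since we are producing a sequence rather than taking limits of the $u_k^{\ast}$ themselves.
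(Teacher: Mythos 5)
Your proposal is correct and matches the paper's intent exactly: the result is obtained by applying the preceding existence theorem (under (h1) and (h2')) to each pair $(r_k,R_k)$, precisely as Theorem~\ref{t2} is deduced from Theorem~\ref{t1}, with the disjointness of the annuli giving distinctness and the monotonicity of $r_k$ or $R_k$ giving the asymptotic conclusions.
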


\begin{remark}
It is worth mentioning that requirement (h1) can be satisfied for a sequence
of pairs $(r_{k},R_{k})$ as in (2$^{0}$), if for example%
\begin{equation*}
\lim \inf_{\tau \rightarrow \infty }\frac{f\left( \tau \right) }{\tau ^{p-1}}%
<\frac{1}{c_{p}}\ \ \ \text{and\ \ \ }\lim \sup_{\tau \rightarrow \infty }%
\frac{f\left( \tau \right) }{\tau ^{p-1}}>\frac{1}{2\Phi \phi \left( \beta
\right) ^{p-1}}.
\end{equation*}%
Similarly, (h1) can be satisfied for a sequence of pairs $(r_{k},R_{k})$ as
in (3$^{0}$), if
\begin{equation*}
\lim \inf_{\tau \rightarrow 0+}\frac{f\left( \tau \right) }{\tau ^{p-1}}<%
\frac{1}{c_{p}}\ \ \ \text{and\ \ \ }\lim \sup_{\tau \rightarrow 0+}\frac{%
f\left( \tau \right) }{\tau ^{p-1}}>\frac{1}{2\Phi \phi \left( \beta \right)
^{p-1}}.
\end{equation*}%
Both situations mean a very strong oscillation towards infinity and zero,
respectively, from values smaller than $\frac{1}{c_{p}}$ to values larger
than $\frac{1}{2\Phi \phi \left( \beta \right) ^{p-1}}.$
\end{remark}
\subsection*{Acknowledgments}

The authors are grateful to the reviewer for the careful reading of the
manuscript and for the valuable remarks that helped improve the quality of
the paper.



\end{document}